\begin{document}

\begin{titlepage}
	{\Huge Random Projections\\ and Dimension Reduction \par}
	\vspace{1cm}
	{\Large Rishi Advani --- Cornell University \\
	Madison Crim --- Salisbury University \\
	Sean O'Hagan --- University of Connecticut\par}
	\vspace{1.5cm}
	{\large\bfseries Summer@ICERM 2020\par}
	\vspace{2cm}
	\hfill\includegraphics[width=0.4\textwidth]{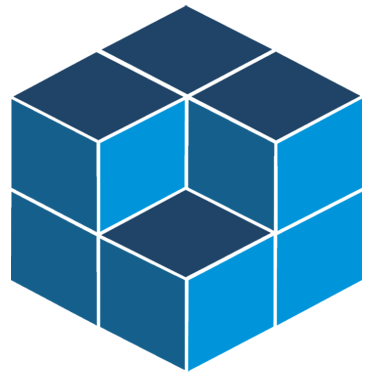}\par

	\vfill

	{\normalsize Thank you to ICERM for (virtually) hosting us this summer, and thank you to all the staff for making this program possible. Thank you to our organizers, Akil Narayan and Yanlai Chen, along with our TAs, Justin Baker and Liu Yang, for supporting us throughout this program.\par}
\end{titlepage}

\tableofcontents
\newpage

\setlength{\parskip}{1em}

\section{Introduction}
This paper, broadly speaking, covers the use of randomness in two main areas: low-rank approximation and kernel methods.

\subsection{Low-rank Approximation}
Low-rank approximation is very important in numerical linear algebra. Many applications depend on matrix decomposition algorithms that provide accurate low-rank representations of data. In modern problems, however, various factors make this hard to accomplish:
\begin{itemize}
    \item the amount of data and amount of features is absurdly large at times
    \item we often have missing or inaccurate data
    \item it may not be possible to simultaneously store all the data in memory
\end{itemize}

One solution to these problems is the use of random projections. Instead of directly computing the matrix factorization, we randomly project the matrix onto a lower-dimensional subspace and then compute the factorization. Often, we are able to do this without significant loss of accuracy.

We describe how randomization can be used to create more efficient algorithms to perform low-rank matrix approximation, as well as introducing a novel randomized algorithm for matrix decomposition. Compared to standard approaches, random algorithms are often faster and more robust. With these randomized algorithms, analyzing massive data sets becomes tractable.

\subsection{Kernel Methods}
Kernel methods are almost diametrically opposite from low-rank approximation. The idea is to project low-dimensional data into a higher-dimensional `feature space,' such that it is linear separable in the feature space. This enables the model to learn a nonlinear separation of the data.

As before, with large data matrices, computing the kernel matrix can be expensive, so we use randomized methods to approximate the matrix.

In addition, we propose an extension of the random Fourier features kernel in which hyperparameter values are randomly sampled from an interval or Borel set.

The experiments discussed in this paper can be found on our GitHub repository and website using the following links:
\begin{itemize}
    \item \url{https://github.com/rishi1999/random-projections}
    \item \url{https://rishi1999.github.io/random-projections/}
\end{itemize}

\section{Johnson-Lindenstrauss Lemma}
The Johnson-Lindenstrauss Lemma, first appearing in \cite{jlpaper}, is a fundamental result in this area and falls under the umbrella of \emph{concentration of measure}. 

Simply put, the Johnson-Lindenstrauss Lemma describes the existence of a map from a higher dimensional space $\mathbb{R}^d$ into a lower dimensional space $\mathbb{R}^k$ that preserves pairwise distances between the $n$ points up to an error tolerance $0<\varepsilon<1$, with $k$ on the order of $\varepsilon^{-2}\log n$. 

In applications with which we are concerned, the data (collection of points) can be viewed as a matrix $A\in \Reals^{n\times d}$, with each row representing a point in $\mathbb{R}^d$, and the map in question can be represented by a matrix in $\Reals^{k\times d}$.

\begin{lemma}[Johnson-Lindenstrauss]
Let $\{x_1,...,x_n\}$ be a collection of data points in $\mathbb{R}^d$. Let $k\in\mathbb{N}$ such that 
\[
k > C\cdot \frac{\log n}{\varepsilon^2} \quad\quad (C\approx 24)
\]
Then there exists a linear map $f: \Reals^d\to\Reals^k$ such that for any $x_i,x_j\in X$,
\[
(1-\varepsilon)\,\|x_i-x_j\|_2^2 \leq \|f(x_i)-f(x_j)\|_2^2 \leq (1+\varepsilon)\,\|x_i-x_j\|_2^2
\]
\end{lemma}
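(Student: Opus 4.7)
The plan is to prove existence via the probabilistic method: construct a random linear map and show that the desired inequality holds for all $\binom{n}{2}$ pairwise differences simultaneously with strictly positive probability. The natural candidate is $f(x) = \frac{1}{\sqrt{k}}\,\Phi x$, where $\Phi \in \mathbb{R}^{k\times d}$ is a random matrix with i.i.d.\ $\mathcal{N}(0,1)$ entries (equivalently, up to rescaling, a random orthogonal projection onto a uniformly chosen $k$-dimensional subspace). By linearity, the pairwise-distance condition reduces to showing that $\|f(u)\|_2^2$ is close to $\|u\|_2^2$ for the fixed set of $\binom{n}{2}$ difference vectors $u = x_i - x_j$.

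The first step I would carry out is the single-vector (distributional) JL statement: for any fixed $u \in \mathbb{R}^d$, each coordinate $(\Phi u)_\ell$ is a centered Gaussian with variance $\|u\|_2^2$, so $\|f(u)\|_2^2 / \|u\|_2^2$ is distributed as $\frac{1}{k}$ times a $\chi^2_k$ random variable, with expectation exactly $1$. I would then prove the two-sided concentration bound
\[
\Pr\!\left(\,\left|\|f(u)\|_2^2 - \|u\|_2^2\right| > \varepsilon\,\|u\|_2^2\,\right) \;\leq\; 2\exp\!\left(-\tfrac{k}{4}\bigl(\varepsilon^2 - \varepsilon^3\bigr)\right),
\]
obtained in the standard way: apply the Chernoff/Markov bound $\Pr(\chi^2_k > (1+\varepsilon)k) \leq \mathbb{E}[e^{t\chi^2_k}]/e^{t(1+\varepsilon)k}$ with the moment generating function $(1-2t)^{-k/2}$, optimize in $t$, and repeat symmetrically for the lower tail.

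The second step is a union bound over all $\binom{n}{2} < n^2/2$ difference vectors $u = x_i - x_j$. The probability that the distortion bound fails for at least one pair is at most
\[
n^2 \exp\!\left(-\tfrac{k}{4}(\varepsilon^2 - \varepsilon^3)\right),
\]
and this quantity is strictly less than $1$ as soon as $k > C\log n / \varepsilon^2$ for a constant $C$ of roughly the size $24$ claimed in the statement (keeping $\varepsilon < 1$ so that $\varepsilon^2 - \varepsilon^3 \geq \varepsilon^2/2$ or similar, then absorbing the $\log 2$ term). Since the failure probability is strictly less than one, there must exist a deterministic realization of $\Phi$ for which \emph{every} pair satisfies the inequality, and that realization provides the desired linear map $f$.

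The main obstacle, and the only non-routine piece, is the single-vector concentration inequality; once it is established with explicit constants, the union bound and the choice of $k$ are mechanical. A minor bookkeeping issue is verifying that the constant $C \approx 24$ actually comes out of the chi-squared tail estimate with the $\varepsilon^2 - \varepsilon^3$ exponent, which requires being slightly careful about how the lower-order $\varepsilon^3$ term is absorbed when $\varepsilon$ is bounded away from $1$.
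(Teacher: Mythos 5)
Your proposal is correct and follows essentially the same route as the paper: a random i.i.d.\ Gaussian matrix scaled by $1/\sqrt{k}$, a single-vector chi-squared Chernoff/MGF tail bound on both sides, and a union bound over the $n(n-1)/2$ difference vectors to conclude existence by the probabilistic method. The only cosmetic difference is that your quoted exponent $\frac{k}{4}(\varepsilon^2-\varepsilon^3)$ is marginally weaker than the $\frac{k}{4}\left(\varepsilon^2-\frac{2}{3}\varepsilon^3\right)$ the paper extracts from $\log(1+\varepsilon)\leq \varepsilon-\frac{\varepsilon^2}{2}+\frac{\varepsilon^3}{3}$, which is exactly the bookkeeping point you already flag.
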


\begin{remark}
The proof we give is probabilistic. Reconstructing the proof from \cite{jl_lemma}, we will take a random rectangular matrix with entries drawn from a standard normal distribution, first show that the expectation of the squared 2-norm of the low-dimensional projection of an arbitrary vector in $\mathbb{R}^n$ is the equivalent to its original squared 2-norm in higher dimensional space, and then show that we can be within an arbitrary tolerance with positive probability.
\end{remark}

\begin{proof}
  Let $u\in\mathbb{R}^d$ and let $R\in\Reals^{k\times d}$, where every entry in $R$ is drawn i.i.d. from a standard normal distribution. Set $v=\frac{1}{\sqrt{k}}Ru$. Here the coefficient $\frac{1}{\sqrt{k}}$ represents a normalization factor.
\begin{prop}
$\mathbb{E}[\|v\|_2^2] = \|u\|_2^2$
\end{prop}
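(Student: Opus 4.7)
The plan is to expand the squared norm coordinate by coordinate and use independence of the entries of $R$. First I would note that the $i$-th coordinate of $v$ is
\[
v_i = \frac{1}{\sqrt{k}} \sum_{j=1}^d R_{ij}\, u_j,
\]
which, as a linear combination of independent standard normals, is itself a centered Gaussian. Its variance is $\frac{1}{k}\sum_j u_j^2 = \frac{\|u\|_2^2}{k}$, so in particular $\mathbb{E}[v_i^2] = \|u\|_2^2/k$.

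Next I would apply linearity of expectation to $\|v\|_2^2 = \sum_{i=1}^k v_i^2$ to conclude that
\[
\mathbb{E}\bigl[\|v\|_2^2\bigr] = \sum_{i=1}^k \mathbb{E}[v_i^2] = k \cdot \frac{\|u\|_2^2}{k} = \|u\|_2^2.
\]
If I wanted to avoid invoking the fact that a linear combination of independent Gaussians is Gaussian, I would instead expand $\mathbb{E}[v_i^2]$ directly as $\frac{1}{k}\sum_{j,\ell} u_j u_\ell\, \mathbb{E}[R_{ij} R_{i\ell}]$ and use $\mathbb{E}[R_{ij} R_{i\ell}] = \delta_{j\ell}$ (which only requires that the entries have mean $0$, variance $1$, and are pairwise independent across $j$), which collapses the sum to $\frac{1}{k}\sum_j u_j^2$.

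There is no real obstacle here; the proposition is essentially a variance computation, and the normalization by $\sqrt{k}$ was chosen precisely so the $k$ in the numerator (from summing over rows) cancels the $k$ in the denominator. The only thing to be careful about is to keep track of the cross terms $\mathbb{E}[R_{ij} R_{i\ell}]$ for $j \ne \ell$, which vanish by independence and the zero mean of each entry.
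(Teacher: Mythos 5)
Your proposal is correct and follows essentially the same route as the paper: the alternative you describe --- expanding $\mathbb{E}[v_i^2]$ as $\frac{1}{k}\sum_{j,\ell} u_j u_\ell\,\mathbb{E}[R_{ij}R_{i\ell}]$ and collapsing via $\mathbb{E}[R_{ij}R_{i\ell}]=\delta_{j\ell}$ --- is exactly the paper's computation, and your primary phrasing (reading off the variance of the Gaussian $v_i$ directly) is just a compressed version of the same argument.
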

\begin{proof}
\begin{align*}
    \mathbb{E}[\|v\|_2^2] &= \mathbb{E}\Big[\sum_{i=1}^kv_i^2\Big] \\
    &= \sum_{i=1}^k \mathbb{E} \,[v_i^2] \\
    &= \sum_{i=1}^k \frac{1}{k}\mathbb{E} \,[(\sum_j R_{ij}u_j)^2] \\
    &= \sum_{i=1}^k \frac{1}{k} \sum_{\substack{1\leq j, l\leq d}} u_ju_l\mathbb{E}[R_{ij}R_{il}] \\
    &= \sum_{i=1}^k \frac{1}{k}\sum_{\substack{1\leq j, l\leq d}} u_ju_l\delta_{jl} \\
    &= \sum_{i=1}^k \frac{1}{k}\sum_{j=1}^d u_j^2 = \sum_{j=1}^d u_j^2 = \|u\|_2^2 \\
\end{align*}
\end{proof}

Now, we have determined the mean of our random variable $\|v\|_2^2$, and it remains to show that its value concentrates around this mean. More specifically, we want to put an upper bound on the probability that we are arbitrarily far from the mean, and later to bound the probability of the union of all of these events to reach the desired conclusion.

\begin{prop}
$\Pr(\|v\|_2^2 \geq (1+\varepsilon)\|u\|_2^2) \leq n^{-2}$
\end{prop}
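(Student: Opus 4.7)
The plan is to reduce $\|v\|_2^2$ to (a multiple of) a chi-squared random variable, apply a Chernoff bound via the moment generating function, and then translate the resulting tail into $n^{-2}$ using the hypothesis on $k$.

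First, I would observe that for each row index $i$, the random variable $\sum_{j=1}^d R_{ij}u_j$ is a linear combination of independent standard normals with deterministic coefficients $u_j$, so it is distributed as $\mathcal{N}(0,\|u\|_2^2)$. Setting $Z_i = \tfrac{1}{\|u\|_2}\sum_j R_{ij}u_j$, the $Z_i$ are i.i.d.\ standard normals (independence across $i$ coming from independence of the rows of $R$), and
\[
\|v\|_2^2 \;=\; \frac{\|u\|_2^2}{k}\sum_{i=1}^{k} Z_i^2.
\]
Hence the event $\|v\|_2^2 \geq (1+\varepsilon)\|u\|_2^2$ coincides with the chi-squared tail event $\sum_i Z_i^2 \geq (1+\varepsilon)k$, and the bound will not depend on $u$.

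Next, I would apply the exponential Markov inequality. For any $t \in (0,1/2)$, using independence and the fact that the MGF of $Z_i^2$ is $(1-2t)^{-1/2}$,
\[
\Pr\!\Big(\sum_i Z_i^2 \geq (1+\varepsilon)k\Big) \;\leq\; e^{-t(1+\varepsilon)k}\,\mathbb{E}[e^{t\sum_i Z_i^2}] \;=\; e^{-t(1+\varepsilon)k}(1-2t)^{-k/2}.
\]
Minimizing in $t$ yields the optimal $t^* = \varepsilon/(2(1+\varepsilon))$, and after simplification the bound becomes $\big[(1+\varepsilon)\,e^{-\varepsilon}\big]^{k/2}$.

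Finally, I would pass from this clean expression to $n^{-2}$ by an elementary Taylor estimate. Using $\log(1+\varepsilon) \leq \varepsilon - \varepsilon^2/2 + \varepsilon^3/3$ for $\varepsilon \in (0,1)$, and the crude bound $\varepsilon^3/3 \leq \varepsilon^2/3$ on that range, one obtains $(1+\varepsilon)e^{-\varepsilon} \leq e^{-\varepsilon^2/6}$, so $[(1+\varepsilon)e^{-\varepsilon}]^{k/2} \leq e^{-k\varepsilon^2/12}$. Plugging in $k > 24\log n/\varepsilon^2$ gives $e^{-2\log n} = n^{-2}$, as claimed. The main obstacle is really bookkeeping at this last step: the chi-squared reduction and the Chernoff optimization are routine, but the Taylor inequality has to be sharp enough to absorb the constant $C\approx 24$ in the hypothesis into a clean $n^{-2}$ conclusion.
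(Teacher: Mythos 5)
Your proposal is correct and follows essentially the same route as the paper: reduce to a $\chi^2_k$ tail via the observation that each coordinate is $\mathcal{N}(0,\|u\|_2^2/k)$, apply the exponential Markov (Chernoff) bound with the MGF $(1-2t)^{-1/2}$, optimize at $t=\varepsilon/(2(1+\varepsilon))$ to get $[(1+\varepsilon)e^{-\varepsilon}]^{k/2}$, and finish with the Taylor inequality $\log(1+a)\leq a-\tfrac{a^2}{2}+\tfrac{a^3}{3}$. Your extra simplification $\varepsilon^3/3\leq\varepsilon^2/3$ is a slightly cleaner way to match the constant $C\approx 24$ than the paper's version, but it is the same argument.
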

\begin{proof}
  We define a random variable $X\in\Reals^k$ as a scaled version of $v$, such that $X=\frac{\sqrt{k}}{\|u\|}v$. Thus, each element $x_i=\frac{1}{\|u\|}R_i^T \, u$ for $i=1\ldots k$. Additionally, denote $x=\|X\|_2^2 = \sum_{i=1}^k x_i^2 = \frac{k\|v\|_2^2}{\|u\|_2^2}$. Since $v_i\ = \frac{1}{\sqrt{k}}R_i^T \, u_i$, we have $v_i\sim N(0, \frac{\|u\|_2^2}{k})$, and thus $x_i\sim N(0,1)$.
  
First, substitute to obtain
\[
    \Pr(\|v\|_2^2 \geq (1+\varepsilon)\|u\|_2^2) = \Pr(x\geq (1+\varepsilon)k). 
\]
Exponentiating both sides and multiplying by $e^\lambda$ for any arbitrary real $\lambda$ yields
\[
\Pr(e^{\lambda x} \geq e^{\lambda(1+\varepsilon)k}).
\]
Next, we use Markov's inequality, which states that for a nonnegative random variable $X$, we have $\Pr(X\geq a) \leq \frac{\E[X]}{a}$, in order to get the upper bound
\[
\Pr(e^{\lambda x} \geq e^{\lambda(1+\varepsilon)k}) \leq \frac{\E [e^{\lambda x}]}{{e^{\lambda(1+\varepsilon)k}}}.
\]
Since $x_i$, and thus $x_i^2$, is independent, the expectation of the product equates to the product of the expectation, which yields equality with the product
\[
    \prod_{i=1}^k\frac{\E[e^{\lambda x_i^2}]}{e^{\lambda(1+\varepsilon)k}} \,.
\]
Since $x_i$, and thus $x_i^2$, is identically distributed, we obtain the final upper bound:
\[
\frac{\big(\E[e^{\lambda x_i^2}]\big)^k}{e^{\lambda (1+\varepsilon)k}} \,.
\]

To evaluate the expectation in the numerator, note that, since $x_i \sim N(0,1)$, we have $x_i^2 \sim \chi_1^2$. We now use the moment generating function from mathematical statistics: observe that if $X\sim\chi_1^2$, we have $M_X(t)=\E[e^{tX}]=(1-2t)^{-1/2}$.

Thus, this yields
\[
\ldots = \Big(\frac{1}{\sqrt{1-2\lambda}\cdot e^{\lambda(1+\varepsilon)}}\Big)^k \,,
\]
and since this is true for any arbitrary $0<\lambda<\frac{1}{2}$, we may choose $\lambda=\frac{\varepsilon}{2(1+\varepsilon)}$, and obtain
\[
\ldots = [(1+\varepsilon)e^{-\varepsilon}]^{k/2}.
\]
For the next step, we use an inequality built on the Taylor expansion of $\log(1+a)$.

\begin{lemma}
For a positive real $a$, 
\[
\log(1+a) \leq a - \frac{a^2}{2}+\frac{a^3}{3}\,.
\]
\end{lemma}
\begin{proof}
Let $f(a) = \exp(a-\frac{a^2}{2}+\frac{a^3}{3})-(1+a)$. Taking the derivative, we obtain $f'(a)=(a^2-a+1)\exp(a-\frac{a^2}{2}+\frac{a^3}{3})-1$. This derivative is always positive, which can be verified by taking its derivative: $f''(a)=\exp(1/6a(6-3a+2a^2))a^2(3-2a+a^2)$, which is always positive on $a>0$ as the exponential and the two polynomial factors are all strictly positive on $a>0$. Since we know $f'(0)=0$, and $f''(a)>0$ for $a>0$, this means $f'(a)>0$ for $a>0$, and thus since $f(0)=0$, we know $f(a)>0$ for $a>0$. Thus, $\exp(a-\frac{a^2}{2}+\frac{a^3}{3}) > 1+a$ for $a>0$. Taking the logarithm of both sides yields the desired result.
\end{proof}

Using this lemma, we achieve the upper bound
\[
  \ldots \leq \exp\Big(-(\frac{\varepsilon^2}{2}-\frac{\varepsilon^3}{3})\frac{k}{2}\Big) \leq e^{-2\log n} \leq n^{-2} \,,
\]
where the first inequality comes from our bound on $k$.
\end{proof}

We can apply a similar procedure to obtain the bound
\[
\Pr(\|v\|_2^2 \geq (1-\varepsilon)\|u\|_2^2)\leq n^{-2} \,,
\]
and we may combine these using the subadditivity of probability (the probability of of a union of events is less than or equal to the sum of their probabilities) to yield 
\[
\Pr\Bigg(\|v\|_2^2 \not\in \Big((1-\varepsilon)\|u\|_2^2,(1+\varepsilon)\|u\|_2^2\Big)\Bigg)\leq 2n^{-2} \,.
\]
Now, since $u$ is an arbitrary vector in $\Reals^d$, we may let $u=x_i-x_j$ for $x_i,x_j$, $i,j\leq n$, and define the event
\[
E_{ij} := \|f(x_i)-f(x_j)\|_2^2 \not\in \Big((1-\varepsilon)\|x_i-x_j\|_2^2, (1+\varepsilon)\|x_i-x_j\|_2^2\Big) \,.
\]

We then obtain the union bound
\[
  \Pr\Big(\bigcup_{\substack{i\leq n\\j<i}} E_{ij} \Big) \leq \sum_{\substack{i\leq n\\j<i}} \Pr(E_{ij}) \leq \frac{n(n-1)}{2}\cdot 2n^{-2}=1-\frac{1}{n} \,.
\]
  Thus, the probability that all of the pairwise distances fall within the desired intervals is given by the complement, and we obtain a lower bound of $\frac{1}{n}$. Since the probability of the event occurring is greater than 0, there must exist a map that satisfies the restrictions we require, concluding the proof.
\end{proof}


\section{Low-rank Approximation}
\subsection{Singular Value Decomposition}

\subsubsection{Deterministic SVD} \label{section: d_svd}
Given any matrix $A\in \mathbb{R}^{m\times n}$, we can express $A$ using the singular value decomposition:
\begin{equation}
\label{detsvd}
A = U_{m\times m}\Sigma_{m\times n} V^*_{n\times n}
\end{equation}
where $U$ and $V$ are unitary matrices and $\Sigma$ is a diagonal matrix with positive diagonal entries $\sigma_1 \geq\sigma_2 \geq .... \geq \sigma_r$ where $r$ is the rank of matrix $A$. The $\sigma_i$'s are called the singular values of $A$. We note that the first $r$ columns of $U$ will form an orthonormal basis for the column space of $A$ \cite{svd_prop}. Likewise, the first $r$ columns of $V$ will form an orthonormal basis for the row space of $A$. The orthonormal columns of $U$ and $V$ also contain the eigenvectors for the matrices $AA^*$ and $A^*A$ \cite{svd_prop}. This can be shown using the singular value decomposition of $A$ to get the following eigendecompositions:
\begin{enumerate}
    \item $A^*A = (U\Sigma V^*)^*(U\Sigma V^*) = V\Sigma^* U^*U\Sigma V^* = V\Sigma^*\Sigma V^* = V\Sigma^2 V^*.$
    \item $AA^* = (U\Sigma V^*)(U\Sigma V^*)^* = U\Sigma V^*V\Sigma^* U^* = U\Sigma\Sigma^* U^* = U\Sigma^2 U^*.$
\end{enumerate}
These properties of the singular value decomposition will become useful in Section \ref{section: eigenface} when we experiment with SVD through an eigenface example. 
\subsubsection{Randomized SVD}
Given a matrix $A$, we want to find a matrix $Q$ with orthonormal columns, such that $A \approx QQ^*A$ \cite{halko2009finding}.

The matrix $QQ^*$ is an orthogonal projector. A projector is a matrix that squares to itself. This means that applying it a second time to a given vector will do nothing because the vector has already been projected into the desired subspace. $QQ^*$ is a projector because
\begin{align*}
(QQ^*)^2 &= (QQ^*)(QQ^*) \\
&= Q(Q^*Q)Q^* \\
&= QIQ^* \\
&= QQ^* \,.
\end{align*}
It is an orthogonal projector because it is Hermitian (equal to its conjugate transpose). The kernel and row space of a matrix are orthogonal complements of each other. So, the kernel and column space are orthogonal iff the matrix is Hermitian.

We want an orthogonal projector primarily for two reasons. One reason is numerical stability -- the operator norm of an orthogonal projector is 1. Another reason is that it projects each vector to the closest possible vector in the subspace. Since it's not ``stretching'' vectors, distances are reasonably preserved. With a general projection, some vectors will be arbitrarily grown and others shrunk, depending on the specific projector (so it's not inherent to the data).

Using ideas from \cite{halko2009finding} we introduce randomness by constructing a $n\times k$ random Gauissan matrix $\Omega$. We set $Y = A\Omega$ and construct the matrix $Q$ whose columns for an orthonormal basis for $Y$. Then an approximate SVD can be computed as follows:

Let $B = Q^*A$. Then, we have $QB = QQ^*A \approx A$. We then compute the SVD of the small (relative to $A$) matrix $B$.

\begin{equation}
\label{randsvd}
B = \tilde{U} \Sigma V^*
\end{equation}

We take $U = Q \tilde{U}$, and we now have $A \approx U \Sigma V^*$. For this to be an exact SVD, we would need to have $U$ unitary, but since we are only trying to find a low-rank SVD approximation, it will in fact not be square, so the best we can do is ensure that it has orthonormal columns. This is equivalent to requiring $U^*U=I$. We have
\[U^*U = (Q\tilde{U})^*(Q\tilde{U}) = \tilde{U}^*Q^*Q\tilde{U}
= \tilde{U}^*\tilde{U} = I \,.\]

Note that traditionally in SVD, $U$ would need to be a square matrix, but here we have a rectangular matrix that contains only approximations to the most dominant singular vectors, not all of them.

Thus, finally, we have constructed a randomized low-rank approximation for the SVD of the matrix $A$.

\subsection{Interpolative Decomposition}

\subsubsection{Deterministic ID}
Given a matrix $A\in \mathbb{R}^{m\times n}$ we can come up with a low-rank matrix approximation that uses $A$'s own columns. As stated in \cite{id}, by reusing the columns of $A$, we are able to save space and keep the structure of the columns.

The interpolative decomposition can be computed using the column-pivoted $QR$ factorization:
\begin{equation}
    AP = QR
\end{equation}
where $P$ is a $n \times n$ permutation matrix moving picked columns to the front. The reordering of the columns of $A$ gives us a nice skeleton for the ID. Namely, the column-pivoted $QR$ chooses the ``best'' $k$ columns from $A$.

To obtain our low-rank approximation we form the submatrix $Q_k$ formed by the first $k$ columns of $Q$. Thus we have the approximation: 
\begin{equation}
    A \approx Q_k Q_k^*A
\end{equation}
which gives us a particular rank $k$ projection of $A$.

\subsubsection{Randomized ID} \label{section: rand_id}
We introduce a novel method to compute a randomized interpolative decomposition.

We randomly sample (without replacement) $p$ columns from the $n$ columns of $A$, where $p > k$. Let $A'$ denote the submatrix formed by these $p$ columns. We then perform a column-pivoted $QR$ factorization on $A'$:
\begin{equation}
    A'P = QR
\end{equation}
Similar to deterministic ID, we take the first $k$ columns of $Q$ to form the submatrix $Q_k$, giving us the decomposition
\begin{equation}
    A \approx Q_k Q_k^*A \,,
\end{equation}
where $Q_kQ_k^*A$ is a rank $k$ projection of A.
\subsection{Fixed-precision approximation problem}
Given a fixed approximation error $\varepsilon$ and a matrix $A$, we want to find a matrix $Q$ with orthonormal columns where $k = k(\varepsilon)$ such that: 
\begin{equation}
\| A-QQ^*A \| \leq \varepsilon
\end{equation}
In order for $A$ to be approximately equal to $QQ^*A$ the distance between the two matrices should be within the range of error $\varepsilon$.

Let $D=A-QQ^*A$. Since $Q^*A$ is a projection of the columns of $A$ onto a lower dimensional space, the Johnson-Lindenstrauss lemma guarantees that if $k>\frac{24}{3\eta^2-2\eta^3}\log n$ \cite{jl_lemma}, there exists such a $Q$ such that any row $D_i$ of $D$, $\|D_i\|_2^2<\eta$. If we set $\eta = \frac{\varepsilon^2}{n}$, and let $D$ denote $A-QQ^*A$, then
\[
\|A-QQ^*A\| = \sqrt{\sum_{i=1}^n \|D_i\|_2^2} \leq \sqrt{n\eta} = \varepsilon.
\]
Thus, a bound of $k>\frac{24n^3}{3\varepsilon^4n-2\varepsilon^6}\log n$ guarantees the existence of a $Q$ in order such that $\|A-QQ^*A\|<\varepsilon$ in the Frobenius norm.


\section{Kernel Methods}
\subsection{Deterministic Kernel Methods}

Kernel methods are ubiquitous in the fields of machine learning and statistics. These methods enable us to learn a nonlinear decision boundary using a linear classification algorithm. We do this by mapping the data from the low-dimensional input space into a high-dimensional feature space in which the data is linearly separable.

Since we only need to know the inner products between pairs of vectors in the feature space, we don't have to explicitly compute the feature map. This is much more computationally efficient. Letting $\phi$ denote the explicit high dimensional mapping, we need only compute \begin{equation}
    k(x,y)=\langle\phi(x),\phi(y)\rangle
\end{equation}
for each pair $(x,y)$ in the input space.

For many feature maps, there exist simple kernel matrices that we can use to perform easier computations:
\begin{itemize}
    \item Polynomial kernel
    \item Radial Basis Function (RBF) / Gaussian kernel
    \item etc.
\end{itemize}

\subsection{Kernel PCA}
Principal component analysis (PCA) is a common linear method for dimensionality reduction. Given a $n\times d$ data matrix $A$, the goal is to find a $n\times k$ representation, with $k<d$, that captures most of the information of the data. 

This can be done by column centering the data, labelling this as $A_0$, and computing an eigendecomposition of the covariance matrix
\begin{equation}
\frac{1}{n}A_0^TA_0 = Q\Lambda Q^{-1}
\end{equation}
Taking the first $k$ eigenvectors in $Q$ in order of decreasing eigenvalues yields the $k$ best \textit{principal components} of the data: an orthogonal set of $k$ linear combinations of the original features that captures the most variance in the data.

Often, when data is not linearly separable, we use kernel methods to project the data into a higher dimensional space before finding principal components. One trade off is that the principal components no longer represent explicit linear combinations of the original features, but rather linear combinations of the transformed features.

\subsection{Kernel SVM}
If we want to train a model on a set of labeled data, one option is to use a Support Vector Machine (SVM). If the data is linearly separable, this construct will find the $(d-1)$-dimensional hyperplane that best separates these $d$-dimensional points into their respective categories. In the simplest case, we have points in a plane, and we are separating them with a line.

When we say we want to find the 'best' separation, we mean that we want to find the separation that maximizes the minimum distance of the points to the hyperplane. This distance that we are trying to maximize is the margin. The intuition is that we want to have as clear of a separation between our two clusters of data points as possible.

If the data is not linearly separable, we can use the kernel trick to salvage the classification scheme. We project the data into a high-dimensional space, where the data is highly likely to be separable, and classify it in that feature space.

\subsection{Randomized Fourier Features}
\label{randomff}

In \cite{rahimirecht2008}, a randomized procedure for approximating the kernel is described by creating a low-dimensional map $z$ into $\Reals^m$ such that
\begin{equation}
k(x,y)=\langle \phi(x),\phi(y)\rangle \approx \frac{1}{m} z(x)z(y)^T.
\end{equation}
This can be done with the method of random Fourier features: given a shift-invariant real-valued kernel $k(x,y)$ on $\Reals^d\times\Reals^d$, if it is normalized such that $k(x,y)\leq 1$ for each $x,y$, then Bochner's theorem tells us that its Fourier transform $p(w)$ is a probability distribution. Then, we may approximate
\begin{align*}
    k(x,y) &= \int_{\Reals^d} p(w)e^{-\mathrm{j}w^T(x-y)}\mathrm{d}w \\
    &= \int_{\Reals^d} p(w)e^{-\mathrm{j}w^Tx}e^{\mathrm{j}w^Ty}\mathrm{d}w \\
    &\approx \frac{1}{m} \sum_{i=1}^m e^{-\mathrm{j}w_i^Tx}e^{\mathrm{j}w_i^Ty} \\
    &\approx \frac{1}{m} \sum_{i=1}^m \cos(w_i^Tx+b_i)\cos(w_i^Ty+b_i)
\end{align*}
where $w_i\sim p(w)$, $b_i\sim \text{Uniform}(0,2\pi)$. The first approximation is from Monte Carlo sampling to approximate the integral. For a given $m$, let 
\begin{equation}
z(x)=\sum_{i=1}^m \cos(w_i^Tx+b_i).
\end{equation}
to yield our approximation $\frac{1}{m}z(x)z(y)^T$.

As an example, consider a standard RBF kernel defined by
\begin{equation}
\label{rbfkern}
k(x,y) = \exp\left(-\gamma\|x-y\|_2^2 \right).
\end{equation}

We can approximate this kernel using $m$ random Fourier features as described above, with $w_i$ drawn from a multivariate normal distribution with mean $0$ and covariance $2\gamma I$.

Let $X\in\Reals^{n\times d}$ be our data matrix. Define the Kernel matrix $K\in\Reals^{n\times n}$ as $K_{ij}=k(x_i,x_j)$, and express our approximation $\hat{K}=\frac{1}{m}z(X)z(X)^T$ \cite{lopezpaz2014randomized}. Note that $\hat{K}$ is a rank $m$ approximation to $K$, and thus while these methods appear to be new, they are intimately connected to the randomized matrix decompositions earlier.

\subsection{Sampling over a range of parameters}
In some cases, an experimenter may wish to use the random Fourier features kernel approximation to approximate a parametric family of kernels, but may not know exactly what parameter choice to make. We introduce a novel method involving Monte Carlo sampling over a parametric range:

Let $k(x,y;\alpha)$ denote a real valued, normalized ($k(x,y;\alpha) \leq 1)$, shift-invariant parametric family of kernels  on $\mathbb{R}^d \times \mathbb{R}^d$, with parameters $\alpha\in E \subset \mathbb{R}^\ell$, where $E$ is the (Borel) parameter domain. Let $p(\alpha)$ be a probability distribution given by the inverse Fourier transform of $k$. For a given $m,q$, we may sample $\alpha_1,...,\alpha_{q}\sim\text{Uniform}(E)$ and subsequently $w_{s_1},\ldots,w_{s_{m}}\sim p(\alpha_s)$ for $s=1,\ldots,q$ and approximate the kernel, sampling over $E$:  
\begin{align*}  
    k(x,y) &= \int_E \int_{\mathbb{R}^d}p(\alpha)e^{-\mathrm{j}w^T(x-y)}\mathrm{d}w\mathrm{d}\alpha \\
    &\approx \frac{1}{q}\sum_{s=1}^{m} \int_{\mathbb{R}^d}p(w;\alpha_s)e^{-\mathrm{j}w_s^T(x-y)}\mathrm{d}w \\
    &\approx \frac{1}{mq} \sum_{s=1}^{q}\sum_{i=1}^{m} e^{-\mathrm{j}w_{s_i}^Tx}e^{\mathrm{j}w_{s_i}^Ty} \\
    &\approx \frac{1}{mq} \sum_{s=1}^{q}\sum_{i=1}^{m} \cos(w_{s_i}^Tx + b_i)\cos(w_{s_i}^Ty + b_i) \\ 
\end{align*}
where $b_i\sim\text{ Uniform}(0,2\pi)$.

This procedure may be useful in cases where efficiency is desired, and an optimal hyperparameter value is unknown, but instead a range is known. When the dataset is too large to test individual values in this range specifically (i.e. a grid search), this method may help to provide decent results at a low computational cost.

\section{Coding Investigations}\label{section: coding}
\subsection{Johnson-Lindenstrauss Lemma}
The code for this experiment can be found at
\url{https://rishi1999.github.io/random-projections/notebooks/html/JL_Lemma.html}

The Johnson-Lindenstrauss lemma is a powerful tool in dimension reduction. This lemma shows that when randomly projecting $n$ points in any dimension into a space of dimension $O(\log n)$ that pairwise distances are approximately preserved. In this section, we will provide experimental results to support one of the propositions instrumental to the proof of JL lemma from \cite{jl_lemma}:
\begin{prop}
Let $u\in \mathbb{R}^d$ be fixed, and let $R$ be a random matrix with $R_{ij} \sim N(0,1)$. Define $v=\frac{1}{\sqrt{k}}Ru$ such that $v\in\mathbb{R}^k$. Then
\begin{equation}
\mathbb{E} \,[\|v\|_2^2] = \|u\|_2^2
\end{equation}
\end{prop}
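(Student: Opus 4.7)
The plan is to reduce the claim to a direct computation using linearity of expectation and the orthogonality of the entries of $R$ under the second moment. Since the proposition concerns an expected squared norm of a linear image of a fixed vector, the natural first move is to write $\|v\|_2^2 = \sum_{i=1}^k v_i^2$, pull the expectation inside the sum, and then expand each $v_i^2$ in terms of the matrix entries $R_{ij}$ and the fixed coordinates $u_j$.

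First I would unpack the normalization: since $v = \tfrac{1}{\sqrt{k}}Ru$, we have $v_i = \tfrac{1}{\sqrt{k}}\sum_{j=1}^d R_{ij}u_j$, so $v_i^2 = \tfrac{1}{k}\sum_{j,l} R_{ij}R_{il}u_j u_l$. After applying linearity of expectation, the only stochastic object remaining is $\mathbb{E}[R_{ij}R_{il}]$. Next I would invoke the i.i.d.\ standard normal assumption on the entries of $R$: independence gives $\mathbb{E}[R_{ij}R_{il}] = \mathbb{E}[R_{ij}]\mathbb{E}[R_{il}] = 0$ when $j \neq l$, and the unit variance gives $\mathbb{E}[R_{ij}^2] = 1$ when $j = l$. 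Compactly, $\mathbb{E}[R_{ij}R_{il}] = \delta_{jl}$.

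Substituting this Kronecker delta collapses the double sum over $(j,l)$ to a single diagonal sum, yielding $\mathbb{E}[v_i^2] = \tfrac{1}{k}\sum_{j=1}^d u_j^2 = \tfrac{1}{k}\|u\|_2^2$. Summing this over the $k$ coordinates of $v$ cancels the $1/k$ normalization factor exactly, producing $\mathbb{E}[\|v\|_2^2] = \|u\|_2^2$, as claimed. The whole argument is essentially bookkeeping once the second-moment identity for $R_{ij}$ is in hand.

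I do not expect any genuine obstacle here: there is no tail bound or concentration argument required, only a first-moment calculation. The only pitfalls are notational — keeping the indices $i$ (the row of $R$, coordinate of $v$) separate from the summation indices $j,l$ (columns of $R$, coordinates of $u$), and remembering that the cross terms vanish because of independence rather than because of any orthogonality of $u$. The factor $1/\sqrt{k}$ is chosen precisely so that the $k$ identically distributed terms $\mathbb{E}[v_i^2]$ sum to $\|u\|_2^2$, which is a good sanity check that the normalization is correct.
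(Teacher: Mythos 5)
Your proposal is correct and follows essentially the same route as the paper's proof: expand $\|v\|_2^2$ coordinatewise, apply linearity of expectation, use $\mathbb{E}[R_{ij}R_{il}]=\delta_{jl}$ to collapse the double sum, and observe that the $1/k$ normalization cancels against the sum over the $k$ coordinates. There is no gap; the argument matches the paper's computation step for step.
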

 This proposition is important as it allows us to randomly project a vector from a $d$-dimensional space into a $k$-dimensional space while preserving the squared Euclidean norm of the original vector in expectation. Algorithm \ref{JL_expectation} will allow us to test the proposition. It proceeds roughly as follows:
 
\begin{enumerate}
    \item Find the squared norm of a fixed high-dimensional vector
    \item Randomly project it 1000 times, and calculate the average squared norm of the projections
    \item Calculate the error between these two values
\end{enumerate}

\begin{lstlisting}[caption=JL lemma - error in random projections, label=JL_expectation, float=htb]
# create fixed unit vector u
u = random.randn(d,1)
u = u / np.linalg.norm(u)

# number of samples we will generate
iterations = 10000
v_errors = np.empty(iterations)

for i in range(iterations):
    # construct random Gaussian matrix
    R = random.randn(k,d)
    v = 1/math.sqrt(k) * R @ u
    # store squared 2-norm of v
    v_errors[i] = np.sum(np.square(v)) - 1

print(f'Mean: {np.mean(v_errors)}')
print(f'Stdev: {np.std(v_errors)}')
plt.hist(v_errors, bins=100)
\end{lstlisting}

To conduct this experiment we will let $u\in \mathbb{R}^{1000}$ and $v\in \mathbb{R}^{10}$. When we ran this algorithm, it computed an error of less than $0.01$. Figure \ref{jl_fig} shows that the relative error approximately centers around a mean value of 0. This shows that, in practice, the statement $\mathbb{E} \,[\|v\|_2^2] = \|u\|_2^2$ does hold when $u$ and $v$ are defined as in the above proposition.

\begin{figure}[htb]
        \centering
        \includegraphics[width = 8 cm]{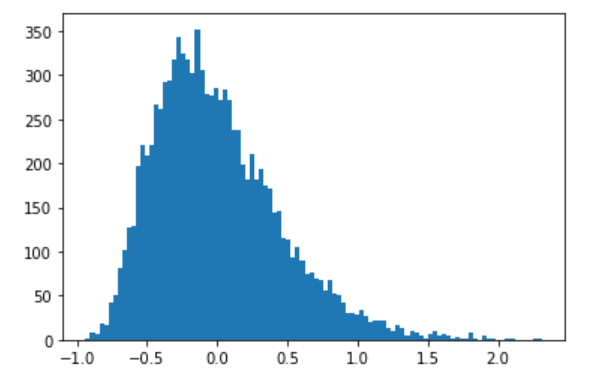}
        \caption{Error Between Squares Norms in High and Low Dimensional Spaces}
        \label{jl_fig}
\end{figure}

\FloatBarrier

\subsection{Random Decompositions}
The code for the following SVD/ID experiments can be found at
\url{https://rishi1999.github.io/random-projections/notebooks/html/Image_Compression.html}

Randomness is a valuable tool for performing low-rank matrix approximations. These efficient random methods for performing approximate matrix factorization enable us to process very large data sets at significantly lowered costs. Although random methods tend to be less accurate than deterministic methods, they can be much more efficient.  

In order to confirm that randomness does in fact improve low-rank approximations, we will experiment with two deterministic methods along with two random methods. We will then compare their relative errors and times by testing $620$ images from LFW dataset \cite{LFWTech} to form a $620\times 187500$ transpose matrix.  

\subsubsection{Interpolative Decomposition} \label{section: ID}
Given a matrix $A\in \mathbb{R}^{m \times n}$, we can compute an interpolative decomposition (ID), a low-rank matrix approximation that includes the original columns of $A$. One way we can do this is through the column-pivoted $QR$ factorization
\begin{equation}
    AP = QR \,,
\end{equation}
where $P$ is a permutation matrix. We take the first k columns from $Q$ to obtain the submatrix $Q_k$. Then we have the following low-rank decomposition:
\begin{equation}
    A \approx Q_k Q_k^\ast A \,.
\end{equation}

In Algorithm \ref{rand_id}, we use a new method described in Section \ref{section: rand_id} to compute a randomized ID (RID).
\begin{lstlisting}[caption=Randomized ID - Column Pivoted QR, label=rand_id, float=htb]
def random_id_rank_k(matrix, k, oversampling=10):
    p = k + oversampling
    m,n = A.shape
    cols = np.random.choice(n, replace=False, size=p)
    S = A[:,cols]
    q,r = np.linalg.qr(S,pivoting = True)
    q = q[:,:k]
    return q @ q.T @ A
\end{lstlisting}

Consider $d,m,r$ where $d$ is the deterministic matrix approximation, $m$ is the original data matrix, and $r$ is the randomized matrix approximation. We can then measure relative error for Figures \ref{rand_id_error_time} and \ref{rand_svd_error_time} in the following way:
\begin{enumerate}
    \item Compute absolute random error:  $ar = \|(r - m) \|_2$
    \item Compute absolute deterministic error: $ad = \|(d - m)\|_2$
    \item Calculate the error of $ar$ relative to $ad$: $relative.error = (ar-ad)/ad$
\end{enumerate}

Upon running the algorithm, as expected, the relative error for the RID tends to be higher than that of the ID. As we test the algorithm against higher values of $k$, we see in figure \ref{rand_id_error_time} that the random error does not decrease for larger rank $k$ approximations as quickly as the deterministic error. 

Despite the RID producing less accurate results, it is significantly more efficient. To show this the average time has been taken to test varying values of $k$ for both methods of computing the interpolative decomposition. In Figure \ref{rand_id_error_time}, it is shown that the random time relative to the deterministic time does not appear to be hardly growing at all as the value of $k$ increases. Accordingly, the randomized interpolative decomposition we have introduced here shows experimentally to be very computationally efficient. 

\begin{figure}[htb]
        \centering
        \includegraphics[width = 8 cm]{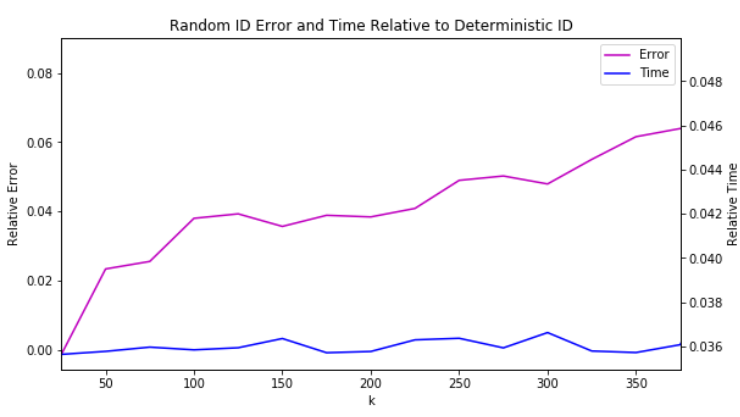}
        \caption{Random ID Error and Time Relative to Deterministic ID}
        \label{rand_id_error_time}
\end{figure}

\subsubsection{Singular Value Decomposition} \label{section:SVD}
Given a matrix $A\in \mathbb{R}^{m \times n}$ we can express the matrix as a product of three ``special'' matrices, the singular value decomposition (SVD):
\begin{align*}
    A = U_{m\times m}\Sigma_{m\times n} V^T_{n\times n}
\end{align*}
where $U$, $V$, and $\Sigma$ are the matrices defined in \ref{section: d_svd}.

We can compute a randomized SVD (RSVD) by first generating a random $n\times k$ matrix $\Omega$ \cite{halko2009finding}, and then forming the following $m\times k$ matrix $Y$: 
\begin{equation}
    Y = (AA^*)^q(A\Omega)
\end{equation}
where $q = 1$ or $q = 2$. In practice with $q = 0$, \cite{halko2009finding} tells us the algorithm can cause the singular spectrum of $A$ to decay slowly and thus the greatest singular values will not capture most of the variance.

Algorithm \ref{rand_svd} will allow us to test the accuracy and efficiency of this method using the following steps to compute the RSVD of $A$:
\begin{enumerate}
    \item Use $QR$ factorization to compute a matrix $Q$ whose orthonormal columns form a basis for the column space of $Y$.
    \item Set $B = Q^*A$
    \item Compute the SVD factorization such that: $B = U'\Sigma V^*$
    \item Thus $A \approx QQ^*A = QB = QU'\Sigma V^*$
\end{enumerate}
Note that we will be testing Algorithm \ref{rand_svd} using real matrices.

\begin{lstlisting}[caption=Randomized SVD, label=rand_svd, float=htb]
def random_svd_rank_k(A, k, power=1):
    omega = random.randn(A.shape[1],k)
    pow_matrix = np.linalg.matrix_power(A @ A.T,power)
    Y = pow_matrix @ (A @ omega)
    Q, R = np.linalg.qr(Y)
    B = Q.T @ A
    U_tilde, Sigma, Vh = np.linalg.svd(B)
    U = Q @ U_tilde
    Sigma = np.diag(Sigma)
    return U @ Sigma @ Vh[:k]
\end{lstlisting}

The results of running Algorithm \ref{rand_svd} for varying values of $k$ shows that the error for computing the RSVD is consistently slightly higher than computing the SVD. In Figure \ref{rand_svd_error_time}, we compare the absolute error of the RSVD relative to the absolute error of SVD as described in Section \ref{section: ID}. This graph also shows the average RSVD running time relative to the SVD running time. Figure \ref{rand_svd_error_time} shows us that the relative error is increasing. Although different from the ID, this is caused by our absolute error for SVD and RSVD decreasing at similar rates. Figure \ref{relative} demonstrates why the explanation for the increase in relative error for ID and SVD differs by showing the error for each method relative to the original data. As expected the RSVD method runs at a faster rate for smaller values of $k$ than SVD. However, it can be seen in Figure \ref{rand_svd_error_time} that as the values of $k$ increase, the RSVD algorithm is not only less accurate than the SVD algorithm, but less efficient as well.

\begin{figure}[htb]
        \centering
        \includegraphics[width = 8 cm]{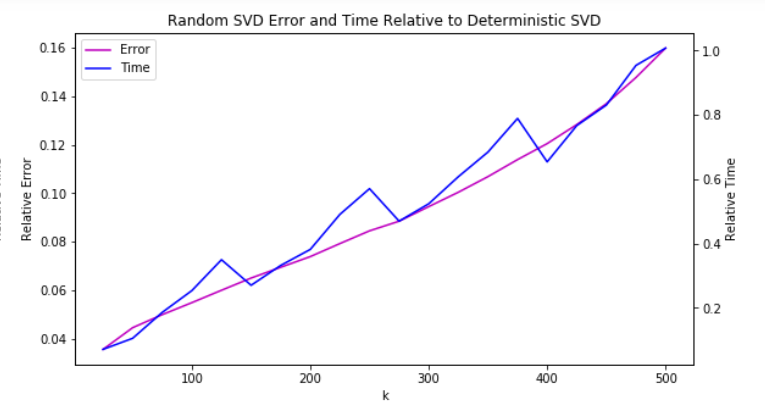}
        \caption{Random SVD Error and Time Relative to Deterministic SVD}
        \label{rand_svd_error_time}
\end{figure}

\begin{figure}[htb]
        \centering
        \includegraphics[width = 8 cm]{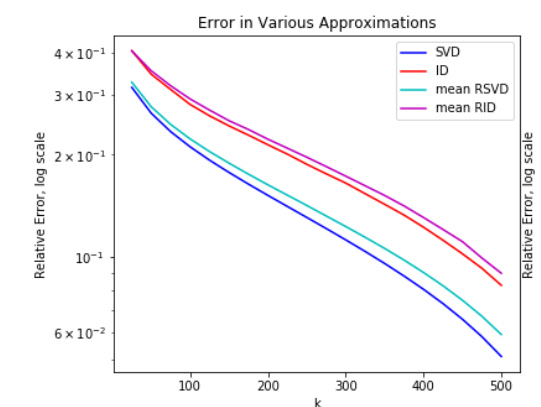}
        \caption{Error Relative to Original Data}
        \label{relative}
\end{figure}

From our experiments we see that, in general, SVD and RSVD have lower errors than ID and RID, and thus more accurate approximations. However, RSVD is far less computationally efficient than RID. Not only does does the randomized SVD lack efficiency for higher values of $k$, but our randomized ID is surprisingly just as efficient for smaller values of $k$ as it is for larger rank-$k$ approximations. Thus, when striving for efficiency or using large datasets, the RID is strongly preferred over RSVD. 

\FloatBarrier

\subsubsection{Eigenfaces} \label{section: eigenface}
The code for the following experiment can be found at
\url{https://rishi1999.github.io/random-projections/notebooks/html/Eigenfaces.html} 

One application of the SVD includes solving the eigenface problem. Using ideas from \cite{eigenfaces} our eigenfaces experiment tests the LFW dataset \cite{LFWTech}. This dataset contains more than 13,000 images of faces where each image is a $250\times 250$. By applying SVD to these images we can extract the most dominant features from each image, resulting in our set of eigenfaces. 

 Our algorithm starts with flattening each image to represent it as a vector of length $250\times 250\times 3 = 187500$. Note, we multiple by three to account for three colors channels of the images. In our experiment we will only use 620 images from the LFW dataset giving us a matrix $A$ of size $187500\times 620$. To normalize the data each column of the matrix will be subtracted by the mean face. This step allows us to take away the features that each face has in common, leaving each image with its distinctive features visible. Given $A$ with mean-subtracted columns, SVD can be performed. The eigenfaces of the data are then given by the columns of $U$. In our experiment we use both SVD and RSVD to compute the eigenfaces of $A$. 

\begin{figure}[htb]
        \centering
        \includegraphics[width = 8 cm]{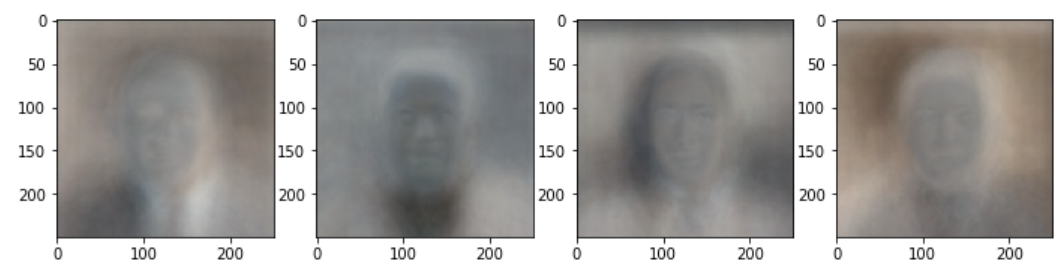}
        \caption{Eigenfaces obtained using Deterministic SVD}
        \label{det_basis}
\end{figure}
\begin{figure}[htb]
        \centering
        \includegraphics[width = 8 cm]{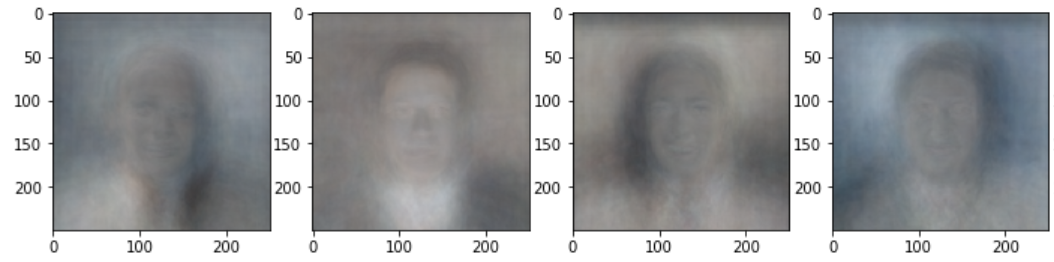}
        \caption{Eigenfaces obtained using Randomized SVD}
        \label{rand_basis}
\end{figure}

In Figure \ref{eigenface}, we display the absolute random error relative to the absolute deterministic error as well as the random time relative to deterministic time. As expected, given the experiment from Section \ref{section:SVD}, the relative error increases since the absolute errors for SVD and RSVD are decreasing at similar rates, which can be seen in Figure \ref{eigen_rel}. It can also be seen that, as the value of $k$, where $k$ is the number of columns of $U$, increases, the relative time increases until the randomized method is running at about the same speed as the deterministic method does. 
\begin{figure}[htb]
        \centering
        \includegraphics[width = 8 cm]{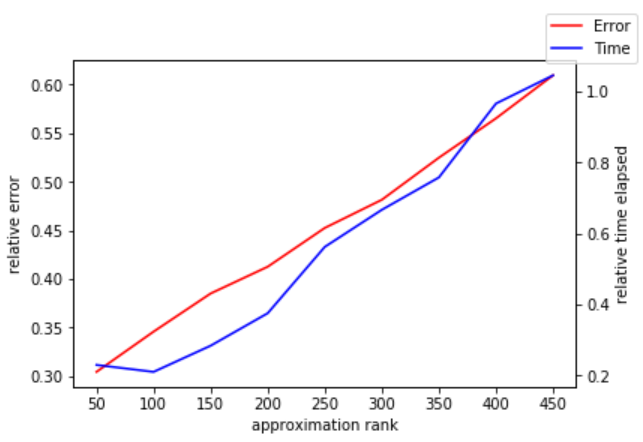}
        \caption{Random SVD Error and Time Relative to Deterministic SVD}
        \label{eigenface}
\end{figure}
\begin{figure}[htb]
        \centering
        \includegraphics[width = 8 cm]{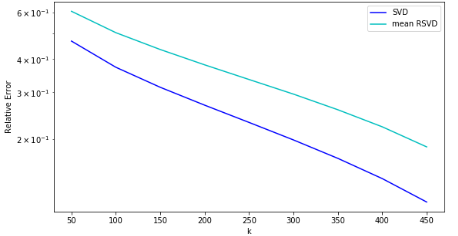}
        \caption{Error Relative to Original Data Matrix}
        \label{eigen_rel}
\end{figure}
\FloatBarrier

\subsection{Least-Squares Approximation}
The code for the following experiment can be found at
\url{https://rishi1999.github.io/random-projections/notebooks/html/Least_Squares.html}

When trying to solve the linear system of equation $Ax = b$ there is not always a vector $x$ that yields an exact solution. The solution can be approximated such that $\|Ax-b\|_2$ is minimized where $A$ is a full rank $m\times n$ matrix with $m \geq n$ and full column rank. To solve the least squares problem we have tested both a deterministic method that uses QR factorization and a randomized method. 

The deterministic method from \cite{qrnotes} used to calculate the linear least-squares problem solution utilizes $QR$ factorization to find a $x^*$ such that the equation $Ax = b$ is best approximated. Since $A$ has full column rank, $A$ has a unique $QR$ factorization: $A = QR$. Using the normal equations $A^TAx = A^Tb$ the vector $x$ can be approximated:
\begin{enumerate}
    \item $(QR)^TQRx = (QR)^Tb$
    \item $R^TQ^TQRx = R^TQ^Tb$ 
    \item Since $Q$ is an orthogonal matrix: $R^TRx = R^TQ^Tb$
    \item $R$ is an upper triangular matrix with positive diagonal entries. Thus $R$ has an inverse and so does its transpose. Thus the system can be solved so that: $x^* = R^{-1}Q^Tb$
\end{enumerate}
To test this method we use Algorithm \ref{det_ls} which generates a new $A$ matrix and $b$ vector each run where the dimensions of $A$ are increasing. 

\begin{lstlisting}[caption=Deterministic Least Squares Method, label=det_ls, float=htb]
dims = np.arange(100, 2000, step=50)
def ls(dims):
    times = []
    for n in tqdm(dims):
        m = 2 * n
        A = np.random.randn(m,n)
        b = np.random.randn(m,1)

        start = perf_counter()
        q,r = np.linalg.qr(A)
        qt = np.transpose(q)
        c = qt @ b
        rinv = np.linalg.inv(r)
        xls = rinv @ c
        end = perf_counter()
        
        times.append(end - start)
    return times
\end{lstlisting}

As expected, as the dimensions for $A$ increases so does the time it takes to run the algorithm. Figure \ref{det_ls_fig} shows that the absolute error is low for smaller matrices but continue to increase as the size of the matrix does. Overall, the absolute error for this method shows reasonably accurate results.

\begin{figure}[htb]
        \centering
        \includegraphics[width = 8 cm]{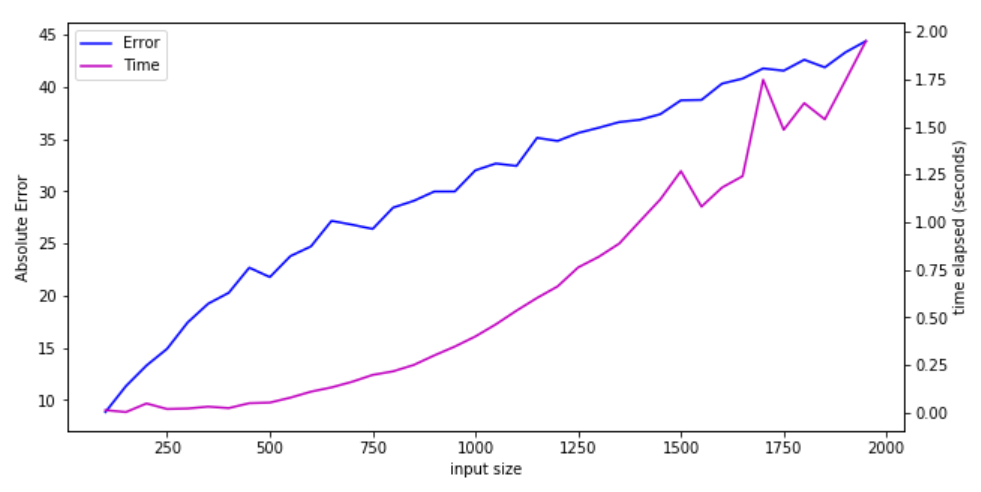}
        \caption{Efficiency and Error of Deterministic Least Squares Approximation Algorithm}
        \label{det_ls_fig}
\end{figure}

In an attempt to find a more efficient algorithm, we have created a random method that solves the least squares problem. Given an integer $k$, this method samples $k$ Gaussian vectors $x$ and keeps the vector that best minimizes $\lVert Ax-b \rVert_2$. This algorithm is then run on many random $A$ matrices and $b$ vectors with entries from a standard normal distribution. Unfortunately, this naive algorithm was unable to beat the deterministic one. In Figure \ref{rand_ls}, the random method proves not only to be less efficient but it is far less accurate.

\begin{figure}[htb]
        \centering
        \includegraphics[width = 8 cm]{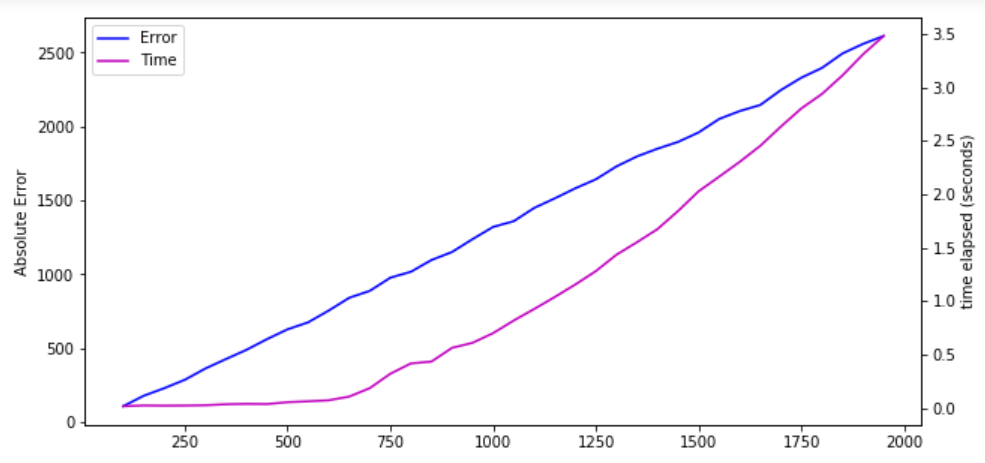}
        \caption{Efficiency and Error of Random Least Squares Approximation Algorithm}
        \label{rand_ls}
\end{figure}

Further investigations to fix this method may include finding a more structured way to randomly sample the $x$ vectors, instead of choosing completely arbitrarily from a standard distribution.

\FloatBarrier

\subsection{Randomized Kernel Methods}
In the following experiments we will use the randomized kernel method as described in Section \ref{randomff} to test $m\times d$ matrices. We provide pseduocode for our random kernel in Algorithm \ref{rand_kern}. 

\begin{lstlisting}[caption=Random Kernel Function, label=rand_kern, float=htb]
def generate_kernel(m=350, s=1/d):
    val = 2*np.pi
    b = np.random.uniform(low=0, high=val, size=(1,m))
    W = np.random.multivariate_normal(
            mean = np.zeros(d),
            cov = 2*s*np.eye(d),
            size = m
            ) #mxd
    def ker(x, y):
        z1 = np.cos(x @ W.T + b)
        z2 = np.cos(y @ W.T + b)
        return z1 @ z2.T / m
    return ker
\end{lstlisting}
\FloatBarrier
\subsubsection{Kernel PCA}
The code for the following KPCA experiment can be found at
\url{https://rishi1999.github.io/random-projections/notebooks/html/Kernel_PCA.html}

We began our investigation into the randomized Fourier features kernel approximation by applying it to principal component analysis (PCA). We investigated the effects of changing the hyperparameter $m$ on the resultant embedding for a conjured dataset of a circle surrounding a cloud of points.

In Figure \ref{rkpca_varying_gamma}, we display the embeddings yielded by plotting the projections onto the first two principal components preceded by a deterministic radial basis function (Gaussian) kernel.

In Figure \ref{rkpca_varying_m}, we vary $m$, the number of random Fourier features sampled, for each value of $\gamma$, the parameter of the Gaussian kernel seen in Equation \ref{rbfkern}. We observe how as $m$ grows, the embeddings more closely resemble their deterministic counterparts, shown in Figure \ref{rkpca_varying_gamma}. This conclusion is logical as $m$ solely represents the number of samples taken to approximate the integral (refer to Section \ref{randomff}). As we will see in Section \ref{randkersvmexp}, a low value of $m$ allows for lower computation cost, but with the trade-off of a worse approximation.

\begin{figure}[htb]
        \centering
        \includegraphics[width = 4cm]{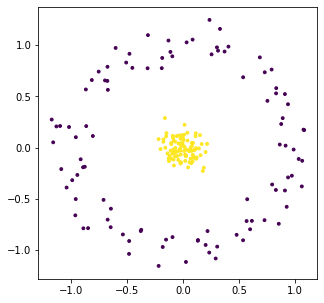}
        \caption{Original data}
        \label{rkpca_orig}
\end{figure}

\begin{figure}[htb]
        \centering
        \includegraphics[width = 10cm]{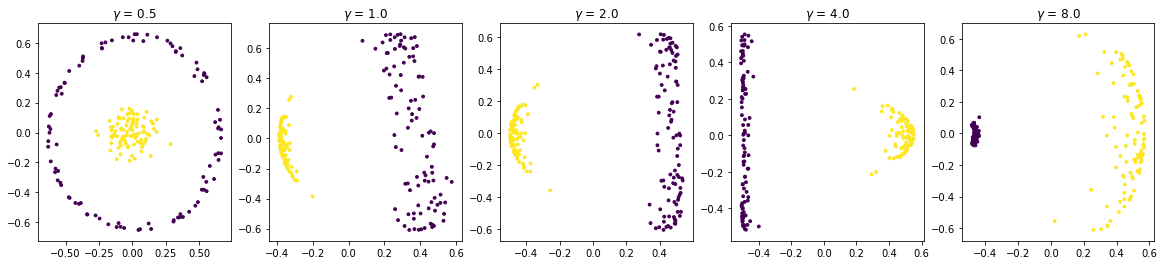}
        \caption{Embeddings with deterministic Gaussian kernel with varying $\gamma$ values}
        \label{rkpca_varying_gamma}
\end{figure}

\begin{figure}[htb]
        \centering
        \includegraphics[width = 12cm]{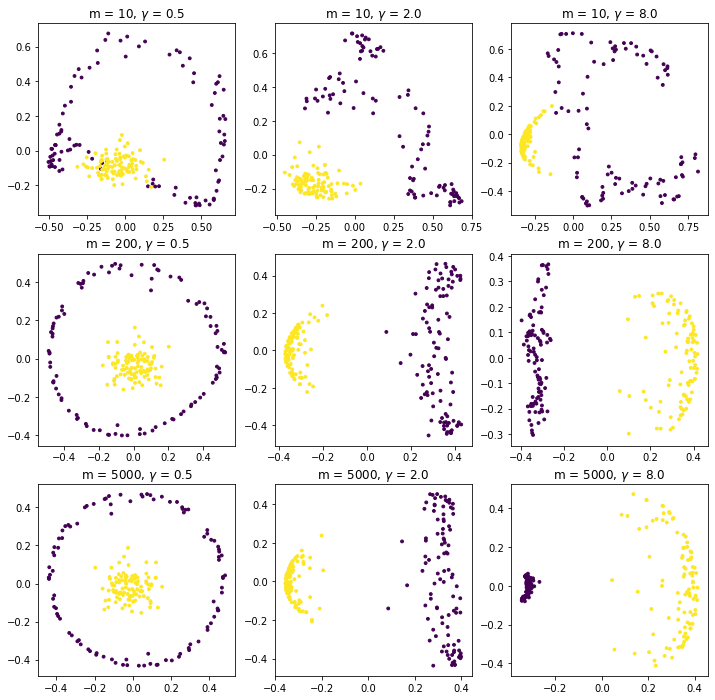}
        \caption{Embeddings with random Fourier features kernel approximating Gaussian kernel with varying $m$ values}
        \label{rkpca_varying_m}
\end{figure}

\FloatBarrier

\subsubsection{Kernel SVM}
The code for the following experiment along with other KSVM investigations can be found on the following pages:
\begin{itemize}
    \item \url{https://rishi1999.github.io/random-projections/notebooks/html/Kernel_SVM.html}
    \item \url{https://rishi1999.github.io/random-projections/notebooks/html/GridSearchSVM.html}
\end{itemize}

\label{randkersvmexp}
One experiment we ran was using the Kernel SVM technique to classify handwritten digits from the MNIST dataset \cite{lecun-mnisthandwrittendigit-2010}. Since this task is not binary classification (there are ten modes: one for each digit), we have to use a modified formulation of SVM to tackle the problem. By default, the scikit-learn \cite{scikit-learn} implementation of SVM uses a `one-vs-one' approach for multiclass classification; instead of performing a single instance of binary classification between two classes, we use the basic SVM to classify between each of the possible $10 \cdot 9/2=45$ pairs of classes and then tally up the results to determine which class fits best.

\begin{figure}[htb]
        \centering
        \includegraphics[width = 8 cm]{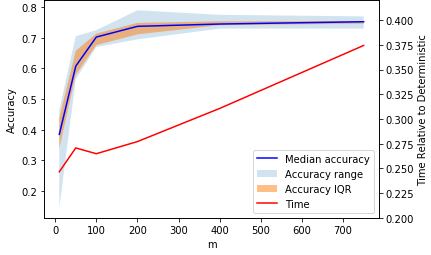}
        \caption{Randomized Kernel SVM Accuracy}
        \label{rksvm}
\end{figure}

In Figure \ref{rksvm}, we observe that as $m$, the number of random Fourier features sampled in the kernel approximation, grows, the accuracy grows (converging to the accuracy of the deterministic kernel), and the computational time increases as well. Thus, we see a similar trade-off between accuracy and time.

In addition, we tested the computational time needed to train and test (cross validate) SVMs on many different hyperparameter values, in the spirit of a grid search. Specifically, we performed three fold cross validation using a deterministic and randomized Gaussian kernel on sets of $100$ and $1000$ $\gamma$ values, and observed computational cost results in the following table.

\begin{table}[ht]
\begin{tabular}{llll}
Num. $\gamma$ values & Det. serial (s) & Rand. serial (s) & Rand. parallel (s) \\
100 & 133.03 & 78.97 & 41.18 \\
1000 & 1898.73 & 733.91 & 467.58
\end{tabular}
\end{table}

For the parallel column, we compute the kernel matrices
\begin{equation}
\hat{K} = \frac{1}{m}z(X)z(X)^T
\end{equation}
in parallel using \emph{batch matrix multiplication} rather than one at a time. We observe that testing hyperparameters using the randomized kernel is significantly faster than using the deterministic kernel, and that computing kernel matrices in parallel provides further speedup. In addition, this experiment is solely using $1000$ samples of MNIST, and the true power of the randomized kernel comes into play further when more samples are used.

For the parallelized approach, we note that the machine on which the experiments were run had two cores, pointing to an ideal speedup up $2.0$ (as ratio of serial to parallel). Our experimental speedup was $1.92$ for $100$ $\gamma$ values, and $1.57$ for $1000$ $\gamma$ values, showing a slight deviation from the optimal speedup.

In addition, we note for the $100$ $\gamma$ trial that the $\gamma$ value that produced the highest accuracy using the randomized kernel corresponded with the best deterministic $\gamma$ value, and for the $1000$ $\gamma$ value experiment, that the best random $\gamma$ corresponded with the 10th (up to uniqueness) best deterministic $\gamma$. These results show that when large amounts of parameters need to be tested, it can be efficient to test the results using the randomized method, find the best hyperparameter value, and use this value in the deterministic kernel.

\section{Conclusion}
Randomization is a powerful tool in low-rank matrix factorization and dimension reduction. 

Specifically, using randomness in matrix decompositions, despite losing accuracy, provides better efficiency. This provides a significant advantage as it enables us to deal with much larger datasets. In this paper we have discussed randomized methods for computing approximate matrix decompositions and compared these with their deterministic analogs. Our results show us that, in general, the SVD is more accurate then ID for both random and deterministic methods. However, the randomized SVD is far less efficient than random ID. As $k$ increases, where $k$ is the rank of the projection, the time it takes to run random SVD increases at a much greater rate then random ID. The results show that once the value of k is large enough the randomized SVD is not only less accurate then deterministic SVD but it is no longer more efficient. Based off our experiments, when aiming for efficiency or using large datasets, the randomized ID method is preferred to randomized SVD.

In the randomized Fourier features kernel approximation, we note that the randomized kernel is effectively a low-rank approximation of the deterministic kernel, with rank corresponding to the number of random Fourier features sampled. We note that the randomized kernel matrices were much less computationally costly to compute than their deterministic counterparts. Applications for these methods include using kernel methods such as PCA or SVM on large datasets, or when many hyperparameters values are to be tested, such as a grid search. In this case, the randomized kernel allows for parallelization using batch matrix multiplication. 

In conclusion, randomized methods are an excellent tool to use when efficiency is desired, especially in cases when their deterministic counterparts are computationally intractable. The phenomenon of concentration of measure allows the standard deviation of these methods to be surprisingly low, allowing for their usage in practical scenarios.

\nocite{icerm}

\emergencystretch=1em
\printbibliography[heading=bibintoc, title={References}]

\end{document}